\newtheorem{theorem}{Theorem}[section]
\newtheorem{lemma}{Lemma}[section]
\newtheorem{remark}{Remark}[section]
\newtheorem{corollary}{Corollary}[section]
\newtheorem{proposition}{Proposition}[section]
\numberwithin{equation}{section}
\begin{document}
	
\title{Some operator inequalities via convexity}
\author{Hamid Reza Moradi, Shigeru Furuichi and Mohammad Sababheh}
\subjclass[2010]{Primary 47A63, Secondary 47A64, 47B15, 15A45.}
\keywords{accretive matrices, numerical radius, Tsallis relative entropy.}

\begin{abstract}
In this article, we employ a standard convex argument to obtain new and refined inequalities related to the matrix mean of two accretive matrices, the numerical radius and the Tsallis relative operator entropy.
\end{abstract}
\maketitle
\pagestyle{myheadings}
\markboth{\centerline {Some operator inequalities via convexity}}
{\centerline {}}
\bigskip
\bigskip
\section{Introduction}
Consider a complex Hilbert space $\left( \mathcal H,\left\langle \cdot,\cdot \right\rangle  \right)$. Let $\mathcal B\left( \mathcal H \right)$ denote the algebra of all bounded linear operators acting on $\left( \mathcal H,\left\langle \cdot,\cdot \right\rangle  \right)$
 An operator $A$ is said to be positive (denoted by $A\ge 0$) if $\left\langle Ax,x \right\rangle \ge 0$ for all $x\in \mathcal H$, and also an operator $A$ is said to be strictly positive (denoted by $A>0$) if $A$ is positive and invertible. The Gelfand map $f\left( t \right)\mapsto f\left( A \right)$ is an isometrically $*$-isomorphism between the ${{C}^{*}}$-algebra $C\left( \text{sp}\left( A \right) \right)$ of continuous functions on the spectrum $\text{sp}\left( A \right)$ of a self-adjoint operator $A$ and the ${{C}^{*}}$-algebra generated by ${{\mathbf 1}_{\mathcal H}}$ and $A$. If $f,g\in C\left( \text{sp}\left( A \right) \right)$, then $f\left( t \right)\ge g\left( t \right)$ ($t\in \text{sp}\left( A \right)$) implies that $f\left( A \right)\ge g\left( A \right)$.

On the other hand, when $A\in\mathcal{B}(\mathcal{H})$ is such that $\mathfrak{R}A >0$, then $A$ is said to be accretive.

When $\mathcal{H}$ is finite dimensional, we identify $\mathcal{B}(\mathcal{H})$ with the algebra $\mathcal{M}_n$ of all complex $n\times n$ matrices. Given a matrix monotone function $f:(0,\infty)\to (0,\infty)$ with $f(1)=1,$ and two accretive matrices $A,B$, there is a matrix mean associated with $f$, denoted by $\sigma_f$ or $\sigma$, defined by \cite{bedrani}
\begin{align}\label{eq_def_sigma_intro}
A\sigma B=A^{\frac{1}{2}}f\left(A^{-\frac{1}{2}}BA^{-\frac{1}{2}}\right)A^{\frac{1}{2}}.
\end{align}
In this formula, if $X$ is any matrix with no eigenvalue in $(-\infty,0]$, and if $f$ is analytic in a domain containing the eigenvalues of $X$, the quantity $f(X)$ is defined via the Dunford integral
$$f(X)=\frac{1}{2\pi i}\int_{\Gamma}f(z)(zI-X)^{-1}dz,$$ where $\Gamma$ is a closed simple curve in $\mathbb{C}$ surrounding the eigenvalues of $X$, and lying in the domain of analyticity of $f$. For such matrices, it has been recently shown in \cite{bedrani} that
\begin{equation}\label{eq_intro_f}
f(X)=\int_{0}^{1}I!_tX\;d\nu_f(t),
\end{equation}
 when $f:(0,\infty)\to (0,\infty)$ is  a matrix monotone function, for some probability measure $\nu_f$ on $[0,1].$ It is well known that when $A,B$ are accretive, then $A^{-\frac{1}{2}}BA^{-\frac{1}{2}}$ does not have any eigenvalue in $(0,\infty),$ \cite{drury}.\\
It should be noted that the identity in \eqref{eq_def_sigma_intro} is an extension of the same identity known for strictly positive matrices.

Using \eqref{eq_def_sigma_intro} and \eqref{eq_intro_f}, it can be easily seen that \cite{bedrani}
\begin{equation}\label{eq_sig_intro_int}
A\sigma_f B=\int_{0}^{1}A!_tB\;d\nu_f(t),
\end{equation}
where $\nu_f$ is the probability measure characterizing the matrix monotone function $f$ associated with $\sigma.$

When $A,B$ are accretive, it has been shown that \cite{bedrani}
\begin{equation}\label{eq_sigma_real_intro}
\Re(A\sigma_f B)\geq (\Re A)\;\sigma_f\;(\Re B)
\end{equation}
for any matrix mean $\sigma$ (defined via \eqref{eq_def_sigma_intro}). This last inequality is the extension of the corresponding inequalities known for the geometric and harmonic means of accretive matrices, \cite{drury,mfr}.\\
Reversing \eqref{eq_sigma_real_intro}, we have \cite{bedrani}
\begin{equation}\label{eq_sigma2_real_intro}
\Re(A\sigma_f B)\leq \sec^2(\alpha)(\Re A)\;\sigma_f\;(\Re B),
\end{equation}
where $A,B$ are sectorial matrices, with sectorial index $\alpha.$ Recall that a matrix $A$ is said to be sectorial with sectorial index $\alpha$, if $W(A)\subset S_{\alpha},$ where $W(A)$ is the numerical range of $A$ and $S_{\alpha}$ is the sector in the complex plane defined by
\begin{align*}
S_{\alpha}=\{z \in \mathbb{C}: \Re(z) > 0, |\Im(z)| \leq \tan(\alpha) \Re(z)  \}; 0\leq \alpha<\frac{\pi}{2}.
\end{align*}
One first main goal of this article is to find refinements of \eqref{eq_sigma_real_intro} and \eqref{eq_sigma2_real_intro}, via convex functions approach. Further inequalities for accretive matrices involving Hermite-Hadamard inequalities and concavity behavior will be presented too.

However, it turns out that this approach can be applied also to obtain some new versions of numerical radius and Tsallis relative entropy inequalities.\\
Recall that when $A\in\mathcal{B}(\mathcal{H})$, the numerical radius of $A$ is defined by
$$\omega(A)=
\sup_{\|x\|=1}\{|\left<Ax,x\right>|:x\in\mathbb{C},\|x\|=1\}.$$ 
This quantity has its applications in operator theory, where a considerable research has been conducted to find optimal bounds for $\omega(A)$; due to the difficulty in computing the exact value of this quantity. Among the most accurate upper bounds for $\omega(A)$ is the celebrated result of Kittaneh \cite[(8)]{nk} stating that
\begin{align}\label{eq_kitt_intro}
\omega \left( A \right)\le \frac{1}{2}\left\| \left| {{A}^{*}} \right|+\left| A \right| \right\|,
\end{align}
where $\|\cdot\|$ is the usual operator norm, and $|A|=(A^*A)^{\frac{1}{2}}$. We will present a new generalized form of this inequality.

The Tsallis relative operator entropy is defined for two strictly positive operators $A,B$ by 
$$
T_t(A|B):=A^{1/2}\ln_t\left(A^{-1/2}BA^{-1/2}\right)A^{1/2}=\frac{A\sharp_t B-A}{t}
$$
for $A,B \geq 0$ and $0<t \le 1$ \cite{ykf}. 
This converges to the relative operator entropy:
$$
\lim_{t\to 0}T_t(A|B)=A^{1/2}\log \left(A^{-1/2}BA^{-1/2}\right)A^{1/2}=:S(A|B).
$$

The key tool in our proofs is the following well known inequality for convex functions.

\begin{lemma}\label{10}
\cite{5} If $f:J\to \left( 0,\infty  \right)$ is a convex function, then
\begin{equation}\label{21}
f\left( \left( 1-t \right)a+tb \right)+2r\left( \frac{f\left( a \right)+f\left( b \right)}{2}-f\left( \frac{a+b}{2} \right) \right)\le \left( 1-t \right)f\left( a \right)+tf\left( b \right),
\end{equation}
and
\begin{equation}\label{22}
\left( 1-t \right)f\left( a \right)+tf\left( b \right)\le f\left( \left( 1-t \right)a+tb \right)+2R\left( \frac{f\left( a \right)+f\left( b \right)}{2}-f\left( \frac{a+b}{2} \right) \right),
\end{equation}
where $0\le t\le 1$, $r=\min \left\{ t,1-t \right\}$, and $R=\max \left\{ t,1-t \right\}$.
\end{lemma}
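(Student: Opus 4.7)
\smallskip
\noindent\textbf{Proof proposal.}
My plan is to reduce each of the two inequalities to a single application of the defining convexity inequality $f(\lambda x+(1-\lambda)y)\le \lambda f(x)+(1-\lambda)f(y)$, by writing one of the four relevant points $a$, $b$, $\tfrac{a+b}{2}$, and $(1-t)a+tb$ as a well-chosen convex combination of two of the others. I will assume without loss of generality that $t\le 1/2$, so that $r=t$ and $R=1-t$; the case $t>1/2$ will then follow by the substitution $t\mapsto 1-t$ together with the exchange $a\leftrightarrow b$, an operation under which both sides of \eqref{21} and \eqref{22} are invariant (the midpoint $\tfrac{a+b}{2}$ is unchanged, and $\min\{t,1-t\}$ and $\max\{t,1-t\}$ are symmetric in $t$).

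For \eqref{21} I would use the identity
\[
(1-t)a+tb \;=\; (1-2t)\,a \;+\; 2t\cdot\frac{a+b}{2},
\]
which is a genuine convex combination precisely when $t\in[0,1/2]$. Applying convexity of $f$ gives
\[
f\bigl((1-t)a+tb\bigr)\;\le\;(1-2t)\,f(a)\;+\;2t\,f\!\left(\frac{a+b}{2}\right),
\]
and adding $t\bigl(f(a)+f(b)\bigr)-2t\,f\!\bigl(\tfrac{a+b}{2}\bigr)$ to both sides rearranges exactly into \eqref{21} with $r=t$.

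For \eqref{22} I would use the mirror-image representation expressing the midpoint as a convex combination involving $(1-t)a+tb$ and $b$, namely
\[
\frac{a+b}{2}\;=\;\frac{1}{2(1-t)}\bigl((1-t)a+tb\bigr)\;+\;\frac{1-2t}{2(1-t)}\,b,
\]
whose coefficients lie in $[0,1]$ exactly because $t\le 1/2$. Convexity of $f$ now produces an \emph{upper} bound on $f\!\bigl(\tfrac{a+b}{2}\bigr)$ in terms of $f\bigl((1-t)a+tb\bigr)$ and $f(b)$; multiplying through by $2(1-t)$ and solving for $f\bigl((1-t)a+tb\bigr)$ yields the lower bound
\[
f\bigl((1-t)a+tb\bigr)\;\ge\;2(1-t)\,f\!\left(\frac{a+b}{2}\right)\;-\;(1-2t)\,f(b),
\]
which, after inserting into $(1-t)f(a)+tf(b)-f\bigl((1-t)a+tb\bigr)$ and simplifying, gives \eqref{22} with $R=1-t$.

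The whole argument ultimately amounts to selecting those two convex decompositions and performing the resulting bookkeeping, so I do not foresee any serious technical obstacle. The only aspect requiring some care is identifying the right combinations: the ``easy'' choice $(1-t)a+tb=(1-2t)a+2t\cdot\tfrac{a+b}{2}$ immediately yields the lower bound on the Jensen gap, but the reverse inequality \eqref{22} is less obvious and forces one to flip perspective and bound the midpoint instead of the interpolant. Once both decompositions are in hand, the case analysis $t\le 1/2$ versus $t\ge 1/2$ is handled uniformly by the symmetry remark above.
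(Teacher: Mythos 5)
Your proof is correct. Note first that the paper itself offers no proof of this lemma: it is quoted from Dragomir's paper on bounds for the normalised Jensen functional, where the general $n$-point version is obtained from a superadditivity/monotonicity property of the Jensen functional $J_n(f,\mathbf{x},\mathbf{p})$ in the weight vector $\mathbf{p}$; the lemma is the two-point case with weights $(1-t,t)$ compared against $(\tfrac12,\tfrac12)$. Your argument replaces that machinery with two direct convex decompositions, and both check out: for \eqref{21}, the identity $(1-t)a+tb=(1-2t)a+2t\cdot\tfrac{a+b}{2}$ is a genuine convex combination for $t\le\tfrac12$ and the bookkeeping after adding $t\bigl(f(a)+f(b)\bigr)-2tf\bigl(\tfrac{a+b}{2}\bigr)$ gives exactly \eqref{21} with $r=t$; for \eqref{22}, the representation $\tfrac{a+b}{2}=\tfrac{1}{2(1-t)}\bigl((1-t)a+tb\bigr)+\tfrac{1-2t}{2(1-t)}b$ has nonnegative coefficients summing to $1$ when $t\le\tfrac12$ (and no division by zero since $1-t\ge\tfrac12$), and substituting the resulting lower bound for $f\bigl((1-t)a+tb\bigr)$ into $(1-t)f(a)+tf(b)-f\bigl((1-t)a+tb\bigr)$ collapses to $2(1-t)\bigl(\tfrac{f(a)+f(b)}{2}-f\bigl(\tfrac{a+b}{2}\bigr)\bigr)$, which is \eqref{22} with $R=1-t$. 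The symmetry $t\mapsto 1-t$, $a\leftrightarrow b$ does leave both inequalities invariant and swaps the two half-intervals, so the reduction to $t\le\tfrac12$ is legitimate. What your route buys is a short, self-contained, completely elementary proof of exactly the two-point statement the paper uses; what Dragomir's route buys is the general $n$-point refinement with arbitrary weight vectors, of which this lemma is a special case. The only cosmetic point: the statement should say $a,b\in J$ with $J$ an interval, which you (like the paper) leave implicit.
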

We refer the reader to \cite{sab_conv} for further related results to Lemma \ref{10}.

To state the following proposition, we need to remind the reader of interpolational means.\\
For a symmetric operator mean $\sigma $ (in the sense that $A\sigma  B=B\sigma A$), a parametrized operator mean ${{\sigma }_{t }}$  ($t \in \left[ 0,1 \right]$) is called an interpolational path for $\sigma $ (or Uhlmann's interpolation for $\sigma $) if it satisfies the following properties for the strictly positive operators $A,B$:
\begin{itemize}
	\item[(c1)] 	$A{{\sigma }_{0}}B=A$ (here we recall the convention ${{T}^{0}}=I$ for any positive operator $T$), $A{{\sigma }_{1}}B=B$, and $A{{\sigma }_{\frac{1}{2}}}B=A\sigma B$;
	\item[(c2)] $\left( A{{\sigma }_{\alpha }}B \right)\sigma \left( A{{\sigma }_{\beta }}B \right)=A{{\sigma }_{\frac{\alpha +\beta }{2}}}B$ for all $\alpha ,\beta \in \left[ 0,1 \right]$;
	\item[(c3)] the map $\alpha \in \left[ 0,1 \right]\mapsto A{{\sigma }_{\alpha }}B$  is norm continuous for each $A$ and $B$.
\end{itemize}
Examples of interpolational means are numerous, but the arithmetic, geometric and harmonic means are the most common. These are defined respectively as follows
$$A\nabla_t B=(1-t)A+t B, A\sharp_t B=A^{\frac{1}{2}}\left(A^{-\frac{1}{2}}BA^{-\frac{1}{2}}\right)^{t}A^{\frac{1}{2}}, A!_tB=((1-t)A^{-1}+t B^{-1})^{-1}.$$

The following proposition then applies.
\begin{proposition}\label{20}
Let $A,B\in \mathcal B\left( \mathcal H \right)$ be two strictly positive operators. Then 	
\[f\left( t \right)=\left\langle A{{\sigma }_{t}}Bx,x \right\rangle \]
is convex on $\left[ 0,1 \right]$. Further, if $\sigma\leq \sharp$, then $f$ is log-convex.
\end{proposition}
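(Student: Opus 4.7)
The plan is to exploit the interpolational axiom (c2) together with the fact that any symmetric operator mean $\sigma$ is sandwiched between the harmonic and arithmetic means (a standard consequence of Kubo--Ando theory), i.e.\ $A\sigma B\le A\nabla B=\frac{A+B}{2}$ for strictly positive $A,B$. Applied inside (c2), this gives
\[
A\sigma_{\frac{\alpha+\beta}{2}}B=(A\sigma_\alpha B)\sigma(A\sigma_\beta B)\le\frac{A\sigma_\alpha B+A\sigma_\beta B}{2},
\]
and taking the inner product with $x$ yields the midpoint (or Jensen) inequality
\[
f\!\left(\tfrac{\alpha+\beta}{2}\right)\le\frac{f(\alpha)+f(\beta)}{2}.
\]
Since (c3) guarantees that $f$ is continuous on $[0,1]$, midpoint convexity promotes to full convexity, which handles the first assertion.

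For the log-convexity claim, the strategy is identical but with the hypothesis $\sigma\le\sharp$ substituted for the automatic bound $\sigma\le\nabla$. By (c2) together with $\sigma\le\sharp$ applied to the positive operators $X:=A\sigma_\alpha B$ and $Y:=A\sigma_\beta B$, we obtain
\[
A\sigma_{\frac{\alpha+\beta}{2}}B=X\sigma Y\le X\sharp Y.
\]
The remaining ingredient is the vector-level inequality $\langle (X\sharp Y)x,x\rangle\le\langle Xx,x\rangle^{1/2}\langle Yx,x\rangle^{1/2}$ for strictly positive $X,Y$. Pairing with $x$ and invoking this bound gives the midpoint log-convexity
\[
f\!\left(\tfrac{\alpha+\beta}{2}\right)\le\sqrt{f(\alpha)f(\beta)},
\]
which together with the continuity from (c3) upgrades to genuine log-convexity of $f$ on $[0,1]$.

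The main obstacle is justifying the scalar inequality $\langle (X\sharp Y)x,x\rangle^{2}\le\langle Xx,x\rangle\,\langle Yx,x\rangle$. The cleanest route is via Ando's block-matrix characterization: the positivity of $\begin{pmatrix}X & X\sharp Y\\ X\sharp Y & Y\end{pmatrix}$ implies, after testing on the vector $(x,\,ty)$ and optimizing over the real parameter $t$, the Cauchy--Schwarz type bound above (alternatively, one may cite this as a well-known property of the geometric operator mean). All other steps are formal: the Kubo--Ando bounds, the algebraic identity (c2), continuity (c3), and the routine fact that a continuous midpoint-convex (respectively midpoint-log-convex) function on an interval is convex (respectively log-convex).
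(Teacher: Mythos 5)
Your proof is correct, and it reaches the same midpoint inequalities $f\bigl(\tfrac{\alpha+\beta}{2}\bigr)\le\tfrac{f(\alpha)+f(\beta)}{2}$ and $f\bigl(\tfrac{\alpha+\beta}{2}\bigr)\le\sqrt{f(\alpha)f(\beta)}$ that the paper does, starting from the same axiom (c2). The difference is in how you pass from the operator identity $A\sigma_{\frac{\alpha+\beta}{2}}B=(A\sigma_\alpha B)\sigma(A\sigma_\beta B)$ to the scalar inequality. The paper first invokes a cited lemma of Kaur, Singh and Aujla, namely $\left\langle (X\sigma Y)x,x\right\rangle\le\left\langle Xx,x\right\rangle\sigma\left\langle Yx,x\right\rangle$ for unit vectors $x$, and only then compares the resulting \emph{scalar} mean with the arithmetic (respectively geometric) mean of scalars. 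You instead apply the \emph{operator}-level dominations $\sigma\le\nabla$ and $\sigma\le\sharp$ before taking inner products, so that for convexity nothing beyond linearity of $\langle\cdot\,x,x\rangle$ is needed, and for log-convexity the only nontrivial ingredient is the single vector-state Cauchy--Schwarz bound $\left\langle (X\sharp Y)x,x\right\rangle^{2}\le\left\langle Xx,x\right\rangle\left\langle Yx,x\right\rangle$, which you prove from Ando's block-matrix characterization of $\sharp$. This makes your argument self-contained where the paper relies on an external lemma, and you are also more careful than the paper in explicitly using the norm continuity from (c3) to upgrade midpoint (log-)convexity to genuine (log-)convexity. Two cosmetic remarks: the test vector in the block-matrix step should be $(x,tx)$ (or take $y=x$ after optimizing), and it is worth noting once that $\left\langle (X\sharp Y)x,x\right\rangle\ge 0$ so that the discriminant argument directly yields the stated square inequality.
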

\begin{proof}
It is well-known that the arithmetic mean is the biggest one among symmetric means. This, together with \cite[Lemma 4]{7}, implies that
\[\begin{aligned}
   f\left( \frac{t+s}{2} \right)&=\left\langle A{{\sigma }_{\frac{t+s}{2}}}Bx,x \right\rangle  \\ 
 & =\left\langle \left( A{{\sigma }_{t}}B \right)\sigma \left( A{{\sigma }_{s}}B \right)x,x \right\rangle  \\ 
 & \le \left\langle A{{\sigma }_{t}}Bx,x \right\rangle \sigma \left\langle A{{\sigma }_{s}}Bx,x \right\rangle  \\ 
 & \le \frac{1}{2}\left( \left\langle A{{\sigma }_{t}}Bx,x \right\rangle +\left\langle A{{\sigma }_{s}}Bx,x \right\rangle  \right) \\ 
 & =\frac{1}{2}\left( f\left( t \right)+f\left( s \right) \right).  
\end{aligned}\]

On the other hand, if $\sigma\leq\sharp$, then the above computations show that
\begin{align*}
f\left(\frac{t+s}{2}\right)&\leq f(t)\sigma f(s)\leq f(t)\sharp f(s),
\end{align*}
which is equivalent to log-convexity of $f$.
\end{proof}
\begin{remark}
Letting $A=I$ in Proposition \ref{20} implies that the function
$f(t)=\left<B^tx,x\right>$ is log-convex on $[0,1],$ for any strictly positive operator $B$. Applying Lemma \ref{10} to this function implies the following multiplicative refinements of the celebrated McCarthy inequalities \cite{3}
$$\left<B^tx,x\right>\leq\left(\frac{\left<B^{\frac{1}{2}}x,x\right>}{\left<Bx,x\right>^{\frac{1}{2}}}\right)^{2r}\left<Bx,x\right>^{t}$$ and

$$\left<Bx,x\right>^t\leq\left(\frac{\left<Bx,x\right>^{\frac{1}{2}}}{\left<B^{\frac{1}{2}}x,x\right>}\right)^{2R}\left<B^{t}x,x\right>,$$
where $0\leq t\leq 1, r=\min\{t,1-t\}$ and $R=\max\{t,1-t\}.$
\end{remark}
\section{Main results}
As mentioned earlier in the introduction, we will be interested in presenting new and refined inequalities for accretive matrices, the numerical radius and the Tsallis relative operator entropy. These are the three parts of our main results.
\subsection{Accretive versions}
In this part of the paper, we present new inequalities for accretive matrices. We emphasize that this approach has never been tickled in the literature to treat accretive matrices, yet it was used for positive ones. In what follows, the notation $\sigma_t$ is implicitly understood to be the interpolational paths of the symmetric mean $\sigma$.
\begin{theorem}\label{4}
Let $A,B\in \mathcal{M}_n$ be two accretive matrices. If $0\le t\le 1$, then
	\[\mathfrak R\left( A{{\nabla }_{t}}B \right)\le \mathfrak R\left( A{{\sigma}_{t}}B \right)+2R\left( \mathfrak R\left( A\nabla B \right)-\left( \mathfrak RA \right)\sigma\left( \mathfrak RB \right) \right),\]
where $R=\max \left\{ t,1-t \right\}$.
\end{theorem}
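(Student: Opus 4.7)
The plan is to combine three ingredients from the introduction: the convexity of $s \mapsto \langle X\sigma_s Y x,x\rangle$ from Proposition \ref{20}, the scalar refinement in Lemma \ref{10}, and the accretive inequality \eqref{eq_sigma_real_intro}. Since $A,B$ are accretive, the matrices $\mathfrak{R}A$ and $\mathfrak{R}B$ are strictly positive, so Proposition \ref{20} applies to $X=\mathfrak{R}A$ and $Y=\mathfrak{R}B$.

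Concretely, I would fix a unit vector $x\in\mathcal H$ and consider the convex function
\[
g(s)=\left\langle (\mathfrak{R}A)\,\sigma_s\,(\mathfrak{R}B)\,x,x\right\rangle,\qquad s\in[0,1].
\]
Applying inequality \eqref{22} with $a=0,\;b=1$, and using the normalizations $\sigma_0=\mathrm{id}_A$, $\sigma_1=\mathrm{id}_B$ and $\sigma_{1/2}=\sigma$ coming from property (c1), gives the scalar inequality
\[
(1-t)\langle (\mathfrak R A)x,x\rangle+t\langle (\mathfrak R B)x,x\rangle
\le g(t)+2R\Bigl(\tfrac{\langle(\mathfrak R A)x,x\rangle+\langle(\mathfrak R B)x,x\rangle}{2}-\langle(\mathfrak R A)\sigma(\mathfrak R B)x,x\rangle\Bigr).
\]
Because $x$ is arbitrary, this upgrades to the operator inequality
\[
(\mathfrak{R}A)\nabla_t(\mathfrak{R}B)\le (\mathfrak{R}A)\sigma_t(\mathfrak{R}B)+2R\bigl((\mathfrak{R}A)\nabla(\mathfrak{R}B)-(\mathfrak{R}A)\sigma(\mathfrak{R}B)\bigr).
\]

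To finish, I would invoke the key accretive inequality \eqref{eq_sigma_real_intro} in the form
$(\mathfrak{R}A)\sigma_t(\mathfrak{R}B)\le \mathfrak{R}(A\sigma_t B)$,
and combine it with the linearity of $\mathfrak{R}$ across the arithmetic terms, namely $\mathfrak R(A\nabla_t B)=(\mathfrak R A)\nabla_t(\mathfrak R B)$ and $\mathfrak R(A\nabla B)=(\mathfrak R A)\nabla(\mathfrak R B)$. Substituting these produces exactly the stated inequality.

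I do not expect a serious obstacle here: all the ingredients are already available. The only subtlety worth double-checking is the justification for passing from the pointwise (quadratic form) inequality to the operator inequality, which is legitimate precisely because both sides are self-adjoint operators (they are real parts and real combinations of accretive means), and the inequality $\langle Sx,x\rangle\le\langle Tx,x\rangle$ for all $x$ does imply $S\le T$ in the operator sense for self-adjoint $S,T$. With that observation in place the argument is a direct chaining of Lemma \ref{10}, Proposition \ref{20}, and \eqref{eq_sigma_real_intro}.
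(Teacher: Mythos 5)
Your proposal is correct and follows essentially the same route as the paper: convexity of $s\mapsto\langle(\mathfrak{R}A)\sigma_s(\mathfrak{R}B)x,x\rangle$ from Proposition \ref{20}, the second inequality of Lemma \ref{10} with $a=0$, $b=1$, and the inequality $\mathfrak{R}(A\sigma_t B)\ge(\mathfrak{R}A)\sigma_t(\mathfrak{R}B)$ (the paper cites \cite[Proposition 5.1]{bedrani} for the interpolational-path form of \eqref{eq_sigma_real_intro}). Your extra remark on passing from the quadratic-form inequality to the operator inequality for self-adjoint operators is a correct and welcome clarification of a step the paper leaves implicit.
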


\begin{proof}
We know that the function
	\[f\left( t \right)=\left\langle \left( \mathfrak RA \right){{\sigma}_{t}}\left( \mathfrak RB \right)x,x \right\rangle \]
is a convex function on $\left[ 0,1 \right]$, since $\mathfrak RA,\mathfrak RB>0$. On the other hand, it has been shown in \cite[Proposition 5.1]{bedrani} that
	\[\mathfrak R\left( A{{\sigma}_{t}}B \right)\ge \left( \mathfrak RA \right){{\sigma}_{t}}\left( \mathfrak RB \right).\]
These tools, together with Lemma \ref{10} imply that
	\[\begin{aligned}
  & \left\langle \left( \left( 1-t \right)\mathfrak{R}A+t\mathfrak{R}B \right)x,x \right\rangle  \\ 
 & \le \left\langle \left( \left( \mathfrak RA \right){{\sigma}_{t}}\left( \mathfrak RB \right)+2R\left( \frac{\left( \mathfrak RA \right)+\left( \mathfrak RB \right)}{2}-\left( \mathfrak RA \right)\sigma\left( \mathfrak RB \right) \right) \right)x,x \right\rangle  \\ 
 & \le \left\langle \left( \mathfrak R\left( A{{\sigma}_{t}}B \right)+2R\left( \frac{\left( \mathfrak RA \right)+\left( \mathfrak RB \right)}{2}-\left( \mathfrak RA \right)\sigma\left( \mathfrak RB \right) \right) \right)x,x \right\rangle   
\end{aligned}\]
Thus,
	\[\mathfrak R\left( A{{\nabla }_{t}}B \right)\le \mathfrak R\left( A{{\sigma}_{t}}B \right)+2R\left( \mathfrak R\left( A\nabla B \right)-\left( \mathfrak RA \right)\sigma\left( \mathfrak RB \right) \right).\]
\end{proof}

\begin{theorem}\label{6}
Let $A,B\in {{\mathcal M}_{n}}$ be accretive matrices such that $W\left( A \right),W\left( B \right)\subset {{S}_{\alpha }}$. Then, for $0\le t\le 1$,
\[\mathfrak R\left( A{{\sigma}_{t}}B \right)\le {{\sec }^{2}}\left( \alpha  \right)\left( \mathfrak R\left( A{{\nabla }_{t}}B \right)-2r\left( \mathfrak R\left( A\nabla B \right)-\left( \mathfrak RA \right){{\sigma}_{}}\left( \mathfrak RB \right) \right) \right),\]
where $r=\min \left\{ t,1-t \right\}$.
\end{theorem}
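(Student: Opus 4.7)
The plan is to adapt the argument used for Theorem \ref{4} by swapping each ingredient for its sectorial/reverse counterpart: the pointwise lower bound $\mathfrak{R}(A\sigma_t B)\ge (\mathfrak{R}A)\sigma_t(\mathfrak{R}B)$ is replaced by the sectorial upper bound \eqref{eq_sigma2_real_intro}, and the right-hand Lemma~\ref{10} inequality \eqref{22} is replaced by the left-hand one \eqref{21}.

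Concretely, fix a unit vector $x$ and consider
$$f(t)=\left\langle (\mathfrak{R}A)\sigma_t(\mathfrak{R}B)x,x\right\rangle,$$
which is convex on $[0,1]$ by Proposition~\ref{20}, since $\mathfrak{R}A$ and $\mathfrak{R}B$ are strictly positive. Applying \eqref{21} with $a=0$, $b=1$, and using the real-linearity identities $(\mathfrak{R}A)\nabla_t(\mathfrak{R}B)=\mathfrak{R}(A\nabla_t B)$ and $(\mathfrak{R}A)\nabla(\mathfrak{R}B)=\mathfrak{R}(A\nabla B)$, one gets
$$\left\langle (\mathfrak{R}A)\sigma_t(\mathfrak{R}B)x,x\right\rangle + 2r\left\langle \bigl(\mathfrak{R}(A\nabla B)-(\mathfrak{R}A)\sigma(\mathfrak{R}B)\bigr)x,x\right\rangle \le \left\langle \mathfrak{R}(A\nabla_t B)x,x\right\rangle.$$
Because $x$ is arbitrary, this upgrades to the operator inequality
$$(\mathfrak{R}A)\sigma_t(\mathfrak{R}B) \le \mathfrak{R}(A\nabla_t B) - 2r\bigl(\mathfrak{R}(A\nabla B) - (\mathfrak{R}A)\sigma(\mathfrak{R}B)\bigr).$$

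To finish, multiply through by the scalar $\sec^2(\alpha)>0$ (which preserves the direction of an operator inequality) and chain on the left with the sectorial bound \eqref{eq_sigma2_real_intro}, namely $\mathfrak{R}(A\sigma_t B)\le\sec^2(\alpha)(\mathfrak{R}A)\sigma_t(\mathfrak{R}B)$, to obtain the claimed bound. The one point worth verifying is the sign of the bracketed correction: since the arithmetic mean majorizes every symmetric mean on positive operators, $(\mathfrak{R}A)\sigma(\mathfrak{R}B)\le \mathfrak{R}(A\nabla B)$, so the bracket is nonnegative and scaling by $\sec^2(\alpha)\ge 1$ indeed enlarges the upper bound rather than reversing any inequality. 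I do not anticipate a deeper obstacle here; the entire proof is a short two-step chain once the correct halves of Lemma~\ref{10} and \eqref{eq_sigma2_real_intro} are identified.
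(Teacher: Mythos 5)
Your proof is correct and follows essentially the same route as the paper: convexity of $t\mapsto\langle(\mathfrak{R}A)\sigma_t(\mathfrak{R}B)x,x\rangle$ from Proposition \ref{20}, the left-hand inequality \eqref{21} of Lemma \ref{10}, and the sectorial bound $\mathfrak{R}(A\sigma_t B)\le\sec^2(\alpha)\,(\mathfrak{R}A)\sigma_t(\mathfrak{R}B)$ chained together. The closing remark about the sign of the bracket is a harmless sanity check but not logically needed, since multiplying an operator inequality by the positive scalar $\sec^2(\alpha)$ preserves it regardless.
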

\begin{proof}
We know that for two sectorial matrices $A,B$ with sectorial index $\alpha$,
\[\mathfrak R\left( A{{\sigma}_{t}}B \right)\le {{\sec }^{2}}\left( \alpha  \right)\left( \left( \mathfrak RA \right){{\sigma}_{t}}\left( \mathfrak RB \right) \right)\]
holds \cite[Proposition 5.2]{bedrani}. Now, applying the same argument as in the proof of Theorem \ref{4}, we infer that
\[\begin{aligned}
  & \frac{1}{{{\sec }^{2}}\left( \alpha  \right)}\left\langle \mathfrak R\left( A{{\sigma}_{t}}B \right)x,x \right\rangle  \\ 
 & \le \left\langle \left( \mathfrak RA \right){{\sigma}_{t}}\left( \mathfrak RB \right)x,x \right\rangle  \\ 
 & \le \left\langle \left( \left( 1-t \right)\left( \mathfrak RA \right)+t\left( \mathfrak RB \right)-2r\left( \frac{\left( \mathfrak RA \right)+\left( \mathfrak RB \right)}{2}-\left( \mathfrak RA \right){{\sigma}_{}}\left( \mathfrak RB \right) \right) \right)x,x \right\rangle.
\end{aligned}\]
\end{proof}

\begin{remark}
If in Theorems \ref{4} and \ref{6}, $A$ and $B$ are positive matrices, we get
\[2r\left( A\nabla B-A\sigma B \right)\le A{{\nabla }_{t}}B-A{{\sigma }_{t}}B\le 2R\left( A\nabla B-A\sigma B \right),\]
which is a generalization of \cite[Corollary 3.1]{6}.
\end{remark}

For the harmonic mean, the following interesting refinement holds, refining \cite[Lemma 2.3]{mfr}.
\begin{theorem}\label{thm_har}
Let $A,B\in \mathcal{M}_n$ be two accretive matrices. If $0\le t\le 1$, then
\[\begin{aligned}
   \mathfrak R\left( A{{!}_{t}}B \right)&\ge {{\left( {{\left(\left( \mathfrak R A \right){{!}_{t}}\left( \mathfrak RB \right) \right)}^{-1}}-2r\left( {{\left( \left( \mathfrak RA \right)!\left(\mathfrak R B \right) \right)}^{-1}}-{{\left( \mathfrak R\left( A!B \right) \right)}^{-1}} \right) \right)}^{-1}} \\ 
 & \ge \left(\mathfrak  RA \right){{!}_{t}}\left(\mathfrak R B \right).
\end{aligned}\]
where $r=\min \left\{ t,1-t \right\}$.
\end{theorem}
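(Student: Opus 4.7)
The plan is to read the claimed middle expression as the conclusion of Lemma~\ref{10} applied to a carefully chosen convex scalar function, then transfer the resulting inequality to the operator level and take inverses. The natural candidate is
\[ f(s) := \langle (\mathfrak{R}(A!_{s} B))^{-1}x, x\rangle,\quad s\in[0,1], \]
for an arbitrary vector $x$. At the three relevant points one has $f(0)=\langle(\mathfrak{R}A)^{-1}x,x\rangle$, $f(1)=\langle(\mathfrak{R}B)^{-1}x,x\rangle$, and $f(1/2)=\langle(\mathfrak{R}(A!B))^{-1}x,x\rangle$, so inequality (21) of Lemma~\ref{10} applied to $f$ with $a=0$, $b=1$ would deliver exactly the inequality we need. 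Everything therefore comes down to verifying the convexity of $f$.

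To verify convexity, first note that accretivity of $A$ and $B$ forces $A^{-1}$ and $B^{-1}$ to be accretive as well, since the identity $\mathfrak{R}(T^{-1})=T^{-1}(\mathfrak{R}T)T^{-*}$ (obtained from $T^{-1}+T^{-*}=T^{-1}(T^{*}+T)T^{-*}$) gives $\mathfrak{R}(T^{-1})>0$ whenever $\mathfrak{R}T>0$. Consequently $L(s):=(A!_{s}B)^{-1}=(1-s)A^{-1}+sB^{-1}$ is affine in $s$ and accretive. Applying the same sandwich identity with $T=L(s)$ and inverting produces
\[ (\mathfrak{R}(A!_s B))^{-1} = L(s)^{*}(\mathfrak{R}L(s))^{-1}L(s). \]
This is of the form $P(s)^{*}Q(s)^{-1}P(s)$ with $P=L$ and $Q=\mathfrak{R}L>0$, both affine in $s$. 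Since the map $(P,Q)\mapsto P^{*}Q^{-1}P$ is jointly operator convex on matrix pairs with $Q>0$ (a standard Schur-complement fact), its composition with the affine map $s\mapsto (L(s),\mathfrak{R}L(s))$ is operator convex in $s$, and in particular $f$ is a convex scalar function on $[0,1]$.

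Convexity of $f$ in hand, inequality (21) of Lemma~\ref{10} with $a=0$, $b=1$ gives, upon converting the scalar estimate to an operator one (it holds for every $x$),
\[ (\mathfrak{R}(A!_t B))^{-1}\le \bigl((\mathfrak{R}A)!_t(\mathfrak{R}B)\bigr)^{-1}-2r\Bigl(\bigl((\mathfrak{R}A)!(\mathfrak{R}B)\bigr)^{-1}-\bigl(\mathfrak{R}(A!B)\bigr)^{-1}\Bigr). \]
Both sides are positive (the right side dominates the positive left side), so inverting reverses the inequality and yields the first bound claimed in the theorem. For the second inequality, Proposition~5.1 of \cite{bedrani} gives $\mathfrak{R}(A!B)\ge(\mathfrak{R}A)!(\mathfrak{R}B)$, so the subtracted term $((\mathfrak{R}A)!(\mathfrak{R}B))^{-1}-(\mathfrak{R}(A!B))^{-1}$ is nonnegative, the bracketed expression is thus $\le((\mathfrak{R}A)!_t(\mathfrak{R}B))^{-1}$, and one more inversion produces the claimed lower bound $(\mathfrak{R}A)!_t(\mathfrak{R}B)$.

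The main obstacle I expect is the convexity step: the rewriting $(\mathfrak{R}(A!_sB))^{-1}=L(s)^{*}(\mathfrak{R}L(s))^{-1}L(s)$ is not transparent from the definition of the harmonic mean of accretive matrices, and it needs both the sandwich identity for $\mathfrak{R}(T^{-1})$ and the joint operator convexity of $(P,Q)\mapsto P^{*}Q^{-1}P$ to be visible. Once this representation is in hand, the remainder of the argument is a direct substitution into Lemma~\ref{10} and two inversions.
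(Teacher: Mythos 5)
Your proof is correct, and its skeleton is the same as the paper's: everything reduces to convexity of $s\mapsto\left(\mathfrak R\left((1-s)A^{-1}+sB^{-1}\right)^{-1}\right)^{-1}$ along the affine path of inverses, followed by an application of inequality \eqref{21} of Lemma \ref{10} with $a=0$, $b=1$ and two order-reversing inversions. The one substantive difference is how the convexity is obtained: the paper simply cites \cite[Theorem 2.2]{rm} for the operator convexity of $T\mapsto\left(\mathfrak R (T^{-1})\right)^{-1}$, applies the refined operator Jensen inequality to that function, and then substitutes $A\mapsto A^{-1}$, $B\mapsto B^{-1}$; you instead re-derive the needed convexity from scratch via the congruence identity $\left(\mathfrak R(T^{-1})\right)^{-1}=T^{*}\left(\mathfrak R T\right)^{-1}T$ and the joint operator convexity of $(P,Q)\mapsto P^{*}Q^{-1}P$, which makes the argument self-contained (and is essentially a proof of Mathias's theorem along the path in question). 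A second, cosmetic difference: for the final lower bound $(\mathfrak R A)!_t(\mathfrak R B)$ the paper uses the midpoint operator-convexity inequality $f\left(\frac{A+B}{2}\right)\le\frac{f(A)+f(B)}{2}$, whereas you invoke $\mathfrak R(A!B)\ge(\mathfrak R A)!(\mathfrak R B)$ from \cite[Proposition 5.1]{bedrani}; after the substitution $A\mapsto A^{-1}$, $B\mapsto B^{-1}$ these are the same statement, so both are valid.
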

\begin{proof}
If $f$ is operator convex, then
	\[g\left( t \right)=\left\langle f\left( \left( 1-t \right)A+tB \right)x,x \right\rangle \]
is convex on $\left[ 0,1 \right]$. This, together with Lemma \ref{10} imply that \cite{4}
	\[f\left( \left( 1-t \right)A+tB \right)\le \left( 1-t \right)f\left( A \right)+tf\left( B \right)-2r\left( \frac{f\left( A \right)+f\left( B \right)}{2}-f\left( \frac{A+B}{2} \right) \right).\]
On the other hand, 
\[f\left( \frac{A+B}{2} \right)\le \frac{f\left( A \right)+f\left( B \right)}{2},\]
which implies
	\[\begin{aligned}
   f\left( \left( 1-t \right)A+tB \right)&\le \left( 1-t \right)f\left( A \right)+tf\left( B \right)-2r\left( \frac{f\left( A \right)+f\left( B \right)}{2}-f\left( \frac{A+B}{2} \right) \right) \\ 
 & \le \left( 1-t \right)f\left( A \right)+tf\left( B \right). 
\end{aligned}\]
Since $f\left( T \right)={{\left( \mathfrak R{{T}^{-1}} \right)}^{-1}}$ is operator convex \cite[Theorem 2.2]{rm}, we get
{\small
\[\begin{aligned}
  & {{\left( \mathfrak R\left( {{\left( \left( 1-t \right)A+tB \right)}^{-1}} \right) \right)}^{-1}} \\ 
 & \le \left( 1-t \right){{\left( \mathfrak R{{A}^{-1}} \right)}^{-1}}+t{{\left( \mathfrak R{{B}^{-1}} \right)}^{-1}}-2r\left( \frac{{{\left( \mathfrak R{{A}^{-1}} \right)}^{-1}}+{{\left( \mathfrak R{{B}^{-1}} \right)}^{-1}}}{2}-{{\left( \mathfrak R\left( {{\left( \frac{A+B}{2} \right)}^{-1}} \right) \right)}^{-1}} \right) \\ 
 & \le \left( 1-t \right){{\left( \mathfrak R{{A}^{-1}} \right)}^{-1}}+t{{\left( R{{B}^{-1}} \right)}^{-1}}.
\end{aligned}\]
}By replacing $A$ and $B$ by ${{A}^{-1}}$ and ${{B}^{-1}}$, respectively, and then taking inverse, we infer that
{\small
\[\begin{aligned}
  & \mathfrak R\left( {{\left( \left( 1-t \right){{A}^{-1}}+t{{B}^{-1}} \right)}^{-1}} \right) \\ 
 & \ge {{\left( \left( 1-t \right){{\left( \mathfrak RA \right)}^{-1}}+t{{\left( \mathfrak RB \right)}^{-1}}-2r\left( \frac{{{\left( \mathfrak RA \right)}^{-1}}+{{\left( \mathfrak RB \right)}^{-1}}}{2}-{{\left( \mathfrak R\left( {{\left( \frac{{{A}^{-1}}+{{B}^{-1}}}{2} \right)}^{-1}} \right) \right)}^{-1}} \right) \right)}^{-1}} \\ 
 & \ge {{\left( \left( 1-t \right){{\left( \mathfrak RA \right)}^{-1}}+t{{\left( \mathfrak RB \right)}^{-1}} \right)}^{-1}} \\ 
\end{aligned}\]
}which is equivalent to
\[\begin{aligned}
   \mathfrak R\left( A{{!}_{t}}B \right)&\ge {{\left( {{\left(\left( \mathfrak R A \right){{!}_{t}}\left( \mathfrak RB \right) \right)}^{-1}}-2r\left( {{\left( \left( \mathfrak RA \right)!\left(\mathfrak R B \right) \right)}^{-1}}-{{\left( \mathfrak R\left( A!B \right) \right)}^{-1}} \right) \right)}^{-1}} \\ 
 & \ge \left(\mathfrak  RA \right){{!}_{t}}\left(\mathfrak R B \right).
\end{aligned}\]
This completes the proof.
\end{proof}
It is interesting that Theorem \ref{thm_har} implies the following refinement of \eqref{eq_sigma_real_intro}.
\begin{corollary}
Let $A,B$ be accretive matrices, and let $\sigma$ be a matrix mean, characterized by the matrix monotone function $f$. Then
\begin{align*}
\mathfrak{R}(A\sigma B)&\geq \int_{0}^{1}\left({{\left( {{\left(\left( \mathfrak R A \right){{!}_{t}}\left( \mathfrak RB \right) \right)}^{-1}}-2r\left( {{\left( \left( \mathfrak RA \right)!\left(\mathfrak R B \right) \right)}^{-1}}-{{\left( \mathfrak R\left( A!B \right) \right)}^{-1}} \right) \right)}^{-1}}\right)d\nu_f(t)\\
&\geq \mathfrak{R}A\sigma\mathfrak{R}B.
\end{align*}
\end{corollary}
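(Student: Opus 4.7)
The plan is to derive this corollary by averaging the harmonic-mean estimate of Theorem~\ref{thm_har} against the probability measure $\nu_f$ associated with $\sigma$. The cornerstone is the integral representation \eqref{eq_sig_intro_int}, which, applied to the accretive pair $A,B$, reads
\[
A\sigma_f B=\int_0^1 A!_t B\,d\nu_f(t).
\]
Since the real-part map $X\mapsto \mathfrak RX$ is $\mathbb R$-linear and continuous, it commutes with this integral, and I would first rewrite
\[
\mathfrak R(A\sigma_f B)=\int_0^1 \mathfrak R(A!_t B)\,d\nu_f(t).
\]

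Next, for each fixed $t\in[0,1]$ I would invoke Theorem~\ref{thm_har} to obtain the pointwise operator estimate
\[
\mathfrak R(A!_t B)\geq\Bigl(\bigl((\mathfrak RA)!_t(\mathfrak RB)\bigr)^{-1}-2r\bigl(((\mathfrak RA)!(\mathfrak RB))^{-1}-(\mathfrak R(A!B))^{-1}\bigr)\Bigr)^{-1}\geq(\mathfrak RA)!_t(\mathfrak RB),
\]
with $r=\min\{t,1-t\}$, and integrate all three expressions against $d\nu_f(t)$. Since the Löwner order is preserved under weighted averages by positive measures, the first inequality of the corollary follows at once. To obtain the second, one recognizes the resulting right-most integral $\int_0^1 (\mathfrak RA)!_t(\mathfrak RB)\,d\nu_f(t)$ as $(\mathfrak RA)\sigma_f(\mathfrak RB)$ through a second application of \eqref{eq_sig_intro_int}, this time to the strictly positive matrices $\mathfrak RA$ and $\mathfrak RB$, whose positivity is guaranteed by the accretivity of $A$ and $B$.

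The argument is essentially mechanical once Theorem~\ref{thm_har} is in hand; there is no substantive obstacle. The only points meriting a brief verification are the measurability and integrability of the middle integrand as a function of $t\in[0,1]$, but these are immediate from the joint continuity of $t\mapsto A!_t B$ and $t\mapsto r(t)=\min\{t,1-t\}$, together with the continuity of matrix inversion on the open cone of matrices with positive definite real part. In short, the corollary is a clean $\nu_f$-averaged restatement of Theorem~\ref{thm_har}, so the real work lies upstream, and the proof amounts to the linearity of the Bochner integral combined with the preservation of operator inequalities under integration.
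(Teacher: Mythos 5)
Your proposal is correct and follows exactly the paper's own route: integrate the pointwise estimate of Theorem~\ref{thm_har} against $d\nu_f(t)$ and identify the outer terms via the representation \eqref{eq_sig_intro_int} applied to $A,B$ and to $\mathfrak RA,\mathfrak RB$. The paper states this in one line; you have merely (and correctly) spelled out the details.
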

\begin{proof}
This follows from Theorem \ref{thm_har}, upon integration, then using \eqref{eq_sig_intro_int}.
\end{proof}

In the next result, we present the accretive version of the first inequality in Lemma \ref{10}. 
\begin{theorem}
Let $f:\left( 0,\infty  \right)\to \left( 0,\infty  \right)$ be a matrix concave with $f\left( 1 \right)=1$, and let $A,B$ be two accretive matrices with $W\left( A \right),W\left( B \right)\subset {{S}_{\alpha }}$ for some $0\le \alpha <{\pi }/{2}\;$. Then for any $0\le t\le 1$
\[\mathfrak R\left( f\left( A \right){{\nabla }_{t}}f\left( B \right) \right)+2r{{\sec }^{2}}\left( \alpha  \right)\left( f\left( \mathfrak R\left( A\nabla B \right) \right)-f\left( \mathfrak RA \right)\nabla f\left( \mathfrak RB \right) \right)\le {{\sec }^{2}}\left( \alpha  \right)\mathfrak Rf\left( A{{\nabla }_{t}}B \right),\]
where $r=\min \left\{ t,1-t \right\}$. 
\end{theorem}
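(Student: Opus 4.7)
The plan is to mimic the arguments of Theorems \ref{4} and \ref{6}, replacing the matrix-mean inequality with a concave refinement of Jensen and then paying the sectorial toll $\sec^2(\alpha)$. Two ingredients will drive the proof: (i) an operator refinement of Jensen for the matrix concave function $f$ applied to $\mathfrak{R}A$ and $\mathfrak{R}B$, and (ii) the sectorial sandwich
\begin{equation*}
f(\mathfrak{R}X) \le \mathfrak{R}f(X) \le \sec^2(\alpha)\, f(\mathfrak{R}X)
\end{equation*}
valid for any accretive $X$ with $W(X)\subset S_\alpha$.

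The sandwich follows directly from \eqref{eq_sigma_real_intro} and \eqref{eq_sigma2_real_intro}: since $f$ is a positive matrix concave function with $f(1)=1$, it is matrix monotone and hence characterizes a matrix mean $\sigma_f$ via \eqref{eq_def_sigma_intro} satisfying $I\sigma_f X=f(X)$; the two real-part inequalities from the introduction then specialize to the displayed sandwich. For the refinement, fix $x\in\mathcal{H}$ and note that $g(s)=\langle f((1-s)\mathfrak{R}A+s\mathfrak{R}B)x,x\rangle$ is a scalar concave function on $[0,1]$ (since $\mathfrak{R}A,\mathfrak{R}B>0$ and $f$ is operator concave). Applying Lemma \ref{10} to $-g$ at the endpoints $a=0,\,b=1$, and then letting $x$ range, I obtain the operator inequality
\begin{equation*}
f(\mathfrak{R}A)\nabla_t f(\mathfrak{R}B) + 2r\bigl(f(\mathfrak{R}(A\nabla B)) - f(\mathfrak{R}A)\nabla f(\mathfrak{R}B)\bigr) \le f(\mathfrak{R}(A\nabla_t B)),
\end{equation*}
after using $\mathfrak{R}(A\nabla_t B)=\mathfrak{R}A\nabla_t\mathfrak{R}B$ and $\mathfrak{R}(A\nabla B)=\mathfrak{R}A\nabla\mathfrak{R}B$.

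With these in hand, I would chain three inequalities. The upper half of the sandwich applied to $A$ and $B$ gives $\mathfrak{R}(f(A)\nabla_t f(B))\le \sec^2(\alpha)\bigl(f(\mathfrak{R}A)\nabla_t f(\mathfrak{R}B)\bigr)$. Adding $2r\sec^2(\alpha)$ times the nonnegative correction $f(\mathfrak{R}(A\nabla B))-f(\mathfrak{R}A)\nabla f(\mathfrak{R}B)$ to both sides and factoring out $\sec^2(\alpha)$, the refinement upgrades the right side to $\sec^2(\alpha) f(\mathfrak{R}(A\nabla_t B))$. Finally, since $S_\alpha$ is convex, $A\nabla_t B$ is still sectorial with index $\alpha$, so the lower half of the sandwich closes the chain with $\sec^2(\alpha) f(\mathfrak{R}(A\nabla_t B))\le \sec^2(\alpha)\mathfrak{R}f(A\nabla_t B)$, which is the asserted inequality. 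The main obstacle is directional bookkeeping: three inequalities pointing in different directions must be combined while keeping the correction term nonnegative and on the correct side of the $\sec^2(\alpha)$ factor; beyond that, each step is routine.
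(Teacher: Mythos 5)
Your proposal is correct and follows essentially the same route as the paper: the same chain consisting of the sandwich $f\left( \mathfrak RX \right)\le \mathfrak Rf\left( X \right)\le {{\sec }^{2}}\left( \alpha  \right)f\left( \mathfrak RX \right)$ (which the paper cites as \cite[Propositions 7.1, 7.2]{bedrani}, and which you rederive from \eqref{eq_sigma_real_intro} and \eqref{eq_sigma2_real_intro} by taking the first argument to be $I$) wrapped around a refined Jensen inequality for $f$ at $\mathfrak RA,\mathfrak RB$. The only cosmetic difference is that you obtain that refinement by applying Lemma \ref{10} to the concave scalar function $s\mapsto \left\langle f\left( \left( 1-s \right)\mathfrak RA+s\mathfrak RB \right)x,x \right\rangle$, whereas the paper rederives it directly through the midpoint splitting $\left( 1-t \right)\mathfrak RA+t\mathfrak RB=\left( 1-2t \right)\mathfrak RA+2t\,\frac{\mathfrak RA+\mathfrak RB}{2}$ together with a case analysis on $t\le 1/2$; both are valid.
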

\begin{proof}
If $f:\left( 0,\infty  \right)\to \left( 0,\infty  \right)$ is  matrix concave with $f\left( 1 \right)=1$ and $A$ is  accretive, then \cite[Proposition 7.1]{bedrani}
	\[f\left( \mathfrak RA \right)\le \mathfrak Rf\left( A \right).\]
Furthermore, if $W\left( A \right)\subset {{S}_{\alpha }}$ for some $0\le \alpha <{\pi }/{2}\;$, then \cite[Proposition 7.2]{bedrani}
	\[\mathfrak Rf\left( A \right)\le {{\sec }^{2}}\left( \alpha  \right)f\left( \mathfrak RA \right).\]
Now, assume that $0\le t\le {1}/{2}\;$. Then using \cite[Propositions 7.1, 7.2]{bedrani}, we have
\[\begin{aligned}
  & \mathfrak Rf\left( A{{\nabla }_{t}}B \right) \\ 
 & =\mathfrak Rf\left( \left( 1-t \right)A+tB \right) \\ 
 & \ge f\left( \left( 1-t \right)\mathfrak RA+t\mathfrak RB \right) \\ 
 & =f\left( \left( 1-2t \right)\mathfrak RA+2t\frac{\mathfrak RA+\mathfrak RB}{2} \right) \\ 
 & \ge \left( 1-2t \right)f\left( \mathfrak RA \right)+2tf\left( \frac{\mathfrak RA+\mathfrak RB}{2} \right) \\ 
 & =\left( 1-t \right)f\left( \mathfrak RA \right)+tf\left( \mathfrak RB \right)+2r\left( f\left( \frac{\mathfrak RA+\mathfrak RB}{2} \right)-\frac{f\left( \mathfrak RA \right)+f\left( \mathfrak RB \right)}{2} \right) \\ 
 & \ge \frac{1}{{{\sec }^{2}}\left( \alpha  \right)}\left( \left( 1-t \right)\mathfrak Rf\left( A \right)+t\mathfrak Rf\left( B \right) \right)+2r\left( f\left( \frac{\mathfrak RA+\mathfrak RB}{2} \right)-\frac{f\left( \mathfrak RA \right)+f\left( \mathfrak RB \right)}{2} \right) \\ 
 & =\frac{1}{{{\sec }^{2}}\left( \alpha  \right)}\mathfrak R\left( f\left( A \right){{\nabla }_{t}}f\left( B \right) \right)+2r\left( f\left( \mathfrak R\left( A\nabla B \right) \right)-f\left( \mathfrak RA \right)\nabla f\left( \mathfrak RB \right) \right), 
\end{aligned}\]
where $r=\min \left\{ t,1-t \right\}$. Consequently
\[\mathfrak R\left( f\left( A \right){{\nabla }_{t}}f\left( B \right) \right)+2r{{\sec }^{2}}\left( \alpha  \right)\left( f\left( \mathfrak R\left( A\nabla B \right) \right)-f\left( \mathfrak RA \right)\nabla f\left( \mathfrak RB \right) \right)\le {{\sec }^{2}}\left( \alpha  \right)\mathfrak Rf\left( A{{\nabla }_{t}}B \right).\]
The same inequality holds when ${1}/{2}\;\le t\le 1$. This completes the proof.
 \end{proof}

 \begin{theorem}
(Hermite-Hadamard inequality for accretive matrices) Let $f:\left( 0,\infty  \right)\to \left( 0,\infty  \right)$ be a matrix concave with $f\left( 1 \right)=1$, and let $A,B$ be two accretive matrices with $W\left( A \right),W\left( B \right)\subset {{S}_{\alpha }}$ for some $0\le \alpha <{\pi }/{2}\;$. Then 
\[\mathfrak R\left( \frac{f\left( A \right)+f\left( B \right)}{2} \right)\le {{\sec }^{2}}\left( \alpha  \right)\int\limits_{0}^{1}{\mathfrak Rf\left( \left( 1-t \right)A+tB \right)dt}\le {{\sec }^{4}}\left( \alpha  \right)\mathfrak Rf\left( \frac{A+B}{2} \right).\]
 \end{theorem}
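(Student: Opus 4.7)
The plan is to reduce the statement to the matrix Hermite--Hadamard inequality for the concave function $f$ applied to the strictly positive real parts $\mathfrak R A,\mathfrak R B$, and then to transfer back and forth between $f(\mathfrak R X)$ and $\mathfrak R f(X)$ using the two-sided bound from \cite[Propositions 7.1, 7.2]{bedrani} already invoked in the previous theorem:
\[
f(\mathfrak R X)\le \mathfrak R f(X)\le \sec^2(\alpha)\,f(\mathfrak R X),\qquad W(X)\subset S_\alpha.
\]
Since $S_\alpha$ is convex and contains $W(A),W(B)$, for every $t\in[0,1]$ the combination $(1-t)A+tB$ is accretive with numerical range still in $S_\alpha$, so this sandwich holds pointwise in $t$ and is preserved under integration.

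For the left inequality, linearity of $\mathfrak R$ together with the upper half of the sandwich gives
\[
\mathfrak R\!\left(\frac{f(A)+f(B)}{2}\right)=\frac{\mathfrak R f(A)+\mathfrak R f(B)}{2}\le \sec^2(\alpha)\,\frac{f(\mathfrak R A)+f(\mathfrak R B)}{2}.
\]
The Hermite--Hadamard inequality for matrix concave $f$ on the strictly positive $\mathfrak R A,\mathfrak R B$ then bounds this by
\[
\sec^2(\alpha)\int_0^1 f\bigl((1-t)\mathfrak R A+t\mathfrak R B\bigr)\,dt=\sec^2(\alpha)\int_0^1 f\bigl(\mathfrak R((1-t)A+tB)\bigr)\,dt,
\]
and a pointwise application of the lower half $f(\mathfrak R\cdot)\le \mathfrak R f(\cdot)$ finishes the first inequality. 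The right inequality is the same chain run backwards: the upper sandwich inflates $\sec^2(\alpha)\int_0^1 \mathfrak R f((1-t)A+tB)\,dt$ by another factor of $\sec^2(\alpha)$, turning the integrand into $f(\mathfrak R((1-t)A+tB))$; the other side of Hermite--Hadamard dominates the resulting integral by $f(\mathfrak R\tfrac{A+B}{2})$; and a final application of the lower sandwich absorbs $f(\mathfrak R\cdot)$ inside $\mathfrak R f(\cdot)$.

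The only genuine new ingredient is the operator Hermite--Hadamard inequality
\[
\frac{f(X)+f(Y)}{2}\le \int_0^1 f\bigl((1-t)X+tY\bigr)\,dt\le f\!\left(\frac{X+Y}{2}\right)
\]
for matrix concave $f$ on strictly positive $X,Y$, and this is obtained by a standard weak-form reduction: for each unit vector $x$ the scalar function $t\mapsto \langle f((1-t)X+tY)x,x\rangle$ is concave on $[0,1]$ by matrix concavity of $f$, so the classical scalar Hermite--Hadamard inequality applies and the test vector is then removed. No step beyond this is expected to present any real obstacle.
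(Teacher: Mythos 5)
Your proof is correct, but it follows a genuinely different route from the paper's. You reduce everything to the positive-definite case: you first pass from $\mathfrak R f(X)$ to $f(\mathfrak R X)$ via the two-sided bound $f(\mathfrak R X)\le \mathfrak R f(X)\le \sec^2(\alpha)f(\mathfrak R X)$ of \cite[Propositions 7.1, 7.2]{bedrani}, apply the classical operator Hermite--Hadamard inequality to the matrix concave $f$ evaluated at the strictly positive matrices $\mathfrak R A,\mathfrak R B$ (which you justify by the standard weak-form reduction $t\mapsto\langle f((1-t)X+tY)x,x\rangle$ being concave), and then transfer back; your observation that $W((1-t)A+tB)\subset S_\alpha$ because $S_\alpha$ is a convex cone is exactly what licenses the pointwise use of Proposition 7.2 inside the integral. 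The paper instead never leaves the accretive setting and never invokes the operator Hermite--Hadamard inequality as a separate lemma: it writes $\frac{A+B}{2}=\frac{((1-t)A+tB)+((1-t)B+tA)}{2}$ and applies a two-point, $\sec^2(\alpha)$-deficient concavity estimate for $\mathfrak R f$ of sectorial matrices (\cite[Theorem 7.2]{bedrani}) twice, once at the midpoint and once at parameter $t$, and then integrates over $t\in[0,1]$ so that the symmetrized integrand collapses to $\int_0^1\mathfrak R f((1-t)A+tB)\,dt$. Both arguments produce the same constants $\sec^2(\alpha)$ and $\sec^4(\alpha)$; yours is more modular in that it cleanly separates the ``real-part transfer'' from the convexity step and rests on a textbook positive-operator inequality, while the paper's is shorter on citations to auxiliary facts and makes the middle term of the Hermite--Hadamard chain appear directly from the symmetrization.
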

 \begin{proof}
 Observe that
\[\begin{aligned}
   \mathfrak Rf\left( \frac{A+B}{2} \right)&=\mathfrak Rf\left( \frac{\left( 1-t \right)A+tB+\left( 1-t \right)B+tA}{2} \right) \\ 
 & \ge \frac{1}{{{\sec }^{2}}\left( \alpha  \right)}\frac{\mathfrak Rf\left( \left( 1-t \right)A+tB \right)+\mathfrak Rf\left( \left( 1-t \right)B+tA \right)}{2} \\ 
 & \ge \frac{1}{{{\sec }^{4}}\left( \alpha  \right)}\frac{\mathfrak Rf\left( A \right)+\mathfrak Rf\left( B \right)}{2},
\end{aligned}\]
where we have used  \cite[Theorem 7.2]{bedrani} twice to obtain the first and second inequalities.
Equivalently,
\[\begin{aligned}
   \mathfrak R\left( \frac{f\left( A \right)+f\left( B \right)}{2} \right)&\le {{\sec }^{2}}\left( \alpha  \right)\mathfrak R\left( \frac{f\left( \left( 1-t \right)A+tB \right)+f\left( \left( 1-t \right)B+tA \right)}{2} \right) \\ 
 & \le {{\sec }^{4}}\left( \alpha  \right)\mathfrak Rf\left( \frac{A+B}{2} \right).  
\end{aligned}\]
By taking integral over $0\le t\le 1$, we reach the desired result.
 \end{proof}

\subsection{Numerical Radius Inequalities} It has been shown in \cite[Theorem 2]{ne} that if $A\in \mathcal B\left( \mathcal H \right)$, then
\begin{equation}\label{5}
{{\omega }^{2p}}\left( A \right)\le \left\| \left( 1-t \right){{\left| {{A}^{*}} \right|}^{2p}}+t{{\left| A \right|}^{2p}} \right\|,
\end{equation}
for any $0\le t\le 1$ and $p\ge 1$. The next result improves the inequality \eqref{5}.
\begin{theorem}\label{23}
Let $A\in \mathcal B\left( \mathcal H \right)$ and let $0\le t\le 1$. Then for any $p\ge 1$,
\[{{\omega }^{2p}}\left( A \right)\le \left\| \left( 1-t \right){{\left| {{A}^{*}} \right|}^{2p}}+t{{\left| A \right|}^{2p}}-2r\left( \frac{{{\left| A \right|}^{2p}}+{{\left| {{A}^{*}} \right|}^{2p}}}{2}-{{\left( \frac{{{\left| A \right|}^{p}}+{{\left| {{A}^{*}} \right|}^{p}}}{2} \right)}^{2}} \right) \right\|,\]
where $r=\min \left\{ t,1-t \right\}$ and $R=\max \left\{ t,1-t \right\}$.
\end{theorem}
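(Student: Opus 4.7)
The plan is to reduce the statement to a pointwise estimate: I will show that for every unit vector $x$,
\[
|\langle Ax,x\rangle|^{2p}\le \left\langle\bigl((1-t)|A^*|^{2p}+t|A|^{2p}-2rN\bigr)x,x\right\rangle,
\]
where $N=\frac{|A|^{2p}+|A^*|^{2p}}{2}-\bigl(\frac{|A|^p+|A^*|^p}{2}\bigr)^2=\tfrac{1}{4}(|A|^p-|A^*|^p)^2\ge 0$. Since the operator in angle brackets is then positive, taking the supremum over unit $x$ recovers its operator norm and closes the argument. The key algebraic observation that drives everything is the operator identity
\[
(1-t)|A^*|^{2p}+t|A|^{2p}-2rN=\begin{cases}(1-2t)|A^*|^{2p}+2tM, & 0\le t\le\tfrac{1}{2},\\ (2t-1)|A|^{2p}+2(1-t)M, & \tfrac{1}{2}\le t\le 1,\end{cases}
\]
with $M=\bigl(\frac{|A|^p+|A^*|^p}{2}\bigr)^2$; this rewrites the target as a genuine convex combination and is exactly the manifestation of Lemma \ref{10} for $f(s)=s^2$ applied at $a=|A^*|^p$, $b=|A|^p$.

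With the identity in hand, it suffices to establish three pointwise bounds: $|\langle Ax,x\rangle|^{2p}\le\langle|A|^{2p}x,x\rangle$, $|\langle Ax,x\rangle|^{2p}\le\langle|A^*|^{2p}x,x\rangle$, and $|\langle Ax,x\rangle|^{2p}\le\langle Mx,x\rangle$. The first two follow routinely from $|\langle Ax,x\rangle|^2\le\|Ax\|^2=\langle|A|^2x,x\rangle$ (and its $A^*$ analogue), by raising to the $p$-th power and applying the McCarthy inequality $\langle Tx,x\rangle^p\le\langle T^p x,x\rangle$ for $T\ge 0$, $p\ge 1$. The third is the heart of the proof: apply Kato's mixed Schwarz $|\langle Ax,x\rangle|^2\le\langle|A|x,x\rangle\langle|A^*|x,x\rangle$, raise it to the $p$-th power, use McCarthy twice to obtain $|\langle Ax,x\rangle|^{2p}\le\langle|A|^px,x\rangle\langle|A^*|^px,x\rangle$, apply the scalar AM-GM $ab\le\bigl(\frac{a+b}{2}\bigr)^2$ to pass to $\bigl\langle\frac{|A|^p+|A^*|^p}{2}x,x\bigr\rangle^2$, and finish with the Cauchy-Schwarz inequality $\langle Tx,x\rangle^2\le\langle T^2x,x\rangle$ applied to the positive operator $T=\frac{|A|^p+|A^*|^p}{2}$.

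Finally, for $0\le t\le\frac{1}{2}$ I take the convex combination of the $|A^*|^{2p}$-bound with weight $1-2t$ and the $M$-bound with weight $2t$, and for $\frac{1}{2}\le t\le 1$ I combine the $|A|^{2p}$-bound with weight $2t-1$ and the $M$-bound with weight $2(1-t)$; by the identity of the first paragraph each combination equals $\langle\bigl((1-t)|A^*|^{2p}+t|A|^{2p}-2rN\bigr)x,x\rangle$, and taking the supremum over unit $x$ completes the proof. The principal obstacle is the chain of four inequalities feeding into the third pointwise bound above: every step is standard, but interleaving Kato, McCarthy, the scalar AM-GM, and Cauchy-Schwarz in exactly the right order, so that the exponents land at $2p$ and the operator $\frac{|A|^p+|A^*|^p}{2}$ appears squared in the right place, is what makes the structure of the conclusion match the claimed form.
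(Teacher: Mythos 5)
Your proof is correct, but it takes a genuinely different route from the paper's. The paper applies Lemma \ref{10} to the log-convex (hence convex) scalar function $t\mapsto \langle |A|^{2pt}x,x\rangle\langle |A^{*}|^{2p(1-t)}x,x\rangle$, then feeds in the general-parameter mixed Schwarz inequality $|\langle Ax,x\rangle|^{2}\le \langle |A|^{2t}x,x\rangle\langle |A^{*}|^{2(1-t)}x,x\rangle$ together with McCarthy, AM--GM and H\"older--McCarthy to convert the scalar conclusion into the stated operator-norm bound. You instead bypass Lemma \ref{10} entirely: you observe that $N=\frac{|A|^{2p}+|A^{*}|^{2p}}{2}-\bigl(\frac{|A|^{p}+|A^{*}|^{p}}{2}\bigr)^{2}=\frac{1}{4}\bigl(|A|^{p}-|A^{*}|^{p}\bigr)^{2}\ge 0$ (the cross terms do combine correctly even though $|A|^{p}$ and $|A^{*}|^{p}$ need not commute), verify the exact operator identity rewriting $(1-t)|A^{*}|^{2p}+t|A|^{2p}-2rN$ as a convex combination of $|A^{*}|^{2p}$ (or $|A|^{2p}$) and $M=\bigl(\frac{|A|^{p}+|A^{*}|^{p}}{2}\bigr)^{2}$, and then only need the symmetric ($t=\tfrac12$) Kato inequality plus three elementary pointwise bounds. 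Each step of yours checks out, including the positivity of the resulting operator that justifies replacing the supremum of $\langle\,\cdot\,x,x\rangle$ by the norm. What the paper's route buys is a stronger intermediate scalar inequality for general $t$ and consistency with the convexity theme of the article; what your route buys is transparency --- it exposes the theorem as an explicit convex recombination of the three classical estimates $\omega^{2p}(A)\le\||A|^{2p}\|$, $\omega^{2p}(A)\le\||A^{*}|^{2p}\|$ and Kittaneh's $\omega^{2p}(A)\le\bigl\|\bigl(\frac{|A|^{p}+|A^{*}|^{p}}{2}\bigr)^{2}\bigr\|$, and makes the reduction to \eqref{eq_kitt_intro} at $t=\tfrac12$ immediate.
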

\begin{proof}
Since the function
\[f\left( t \right)=\left\langle {{\left| A \right|}^{2pt}}x,x \right\rangle \left\langle {{\left| {{A}^{*}} \right|}^{2p\left( 1-t \right)}}x,x \right\rangle \]
is log-convex on $\left( 0,\infty  \right)$, so is convex. Hence, by Lemma \ref{10}, we obtain	
\[\begin{aligned}
  & \left\langle {{\left| A \right|}^{2pt}}x,x \right\rangle \left\langle {{\left| {{A}^{*}} \right|}^{2p\left( 1-t \right)}}x,x \right\rangle +2r\left( \frac{\left\langle {{\left| A \right|}^{2p}}x,x \right\rangle +\left\langle {{\left| {{A}^{*}} \right|}^{2p}}x,x \right\rangle }{2}-\left\langle {{\left| A \right|}^{p}}x,x \right\rangle \left\langle {{\left| {{A}^{*}} \right|}^{p}}x,x \right\rangle  \right) \\ 
 & \le \left( 1-t \right)\left\langle {{\left| {{A}^{*}} \right|}^{2p}}x,x \right\rangle +t\left\langle {{\left| A \right|}^{2p}}x,x \right\rangle ,
\end{aligned}\]
where $r=\min \left\{ t,1-t \right\}$, $R=\max \left\{ t,1-t \right\}$, and $0\le t\le 1$. By applying the arithmetic-geometric mean inequality, we infer that
\[\begin{aligned}
  & \left\langle {{\left| A \right|}^{2pt}}x,x \right\rangle \left\langle {{\left| {{A}^{*}} \right|}^{2p\left( 1-t \right)}}x,x \right\rangle  \\ 
 & \le \left\langle \left( \left( 1-t \right){{\left| {{A}^{*}} \right|}^{2p}}+t{{\left| A \right|}^{2p}} \right)x,x \right\rangle +2r\left( \left\langle {{\left| A \right|}^{p}}x,x \right\rangle \left\langle {{\left| {{A}^{*}} \right|}^{p}}x,x \right\rangle -\left\langle \frac{{{\left| A \right|}^{2p}}+{{\left| {{A}^{*}} \right|}^{2p}}}{2}x,x \right\rangle  \right) \\ 
 & \le \left\langle \left( \left( 1-t \right){{\left| {{A}^{*}} \right|}^{2p}}+t{{\left| A \right|}^{2p}} \right)x,x \right\rangle +2r\left( {{\left( \frac{\left\langle {{\left| A \right|}^{p}}x,x \right\rangle +\left\langle {{\left| {{A}^{*}} \right|}^{p}}x,x \right\rangle }{2} \right)}^{2}}-\left\langle \frac{{{\left| A \right|}^{2p}}+{{\left| {{A}^{*}} \right|}^{2p}}}{2}x,x \right\rangle  \right) \\ 
 & =\left\langle \left( \left( 1-t \right){{\left| {{A}^{*}} \right|}^{2p}}+t{{\left| A \right|}^{2p}} \right)x,x \right\rangle +2r\left( {{\left\langle \left( \frac{{{\left| A \right|}^{p}}+{{\left| {{A}^{*}} \right|}^{p}}}{2} \right)x,x \right\rangle }^{2}}-\left\langle \frac{{{\left| A \right|}^{2p}}+{{\left| {{A}^{*}} \right|}^{2p}}}{2}x,x \right\rangle  \right) \\ 
 & \le \left\langle \left( \left( 1-t \right){{\left| {{A}^{*}} \right|}^{2p}}+t{{\left| A \right|}^{2p}} \right)x,x \right\rangle +2r\left( \left\langle {{\left( \frac{{{\left| A \right|}^{p}}+{{\left| {{A}^{*}} \right|}^{p}}}{2} \right)}^{2}}x,x \right\rangle -\left\langle \frac{{{\left| A \right|}^{2p}}+{{\left| {{A}^{*}} \right|}^{2p}}}{2}x,x \right\rangle  \right),
\end{aligned}\]
where in the last inequality we used the H\"older-McCarthy inequality. Thus,
\[\begin{aligned}
  & \left\langle {{\left| A \right|}^{2pt}}x,x \right\rangle \left\langle {{\left| {{A}^{*}} \right|}^{2p\left( 1-t \right)}}x,x \right\rangle  \\ 
 & \le \left\langle \left( \left( 1-t \right){{\left| {{A}^{*}} \right|}^{2p}}+t{{\left| A \right|}^{2p}} \right)x,x \right\rangle +2r\left( \left\langle {{\left( \frac{{{\left| A \right|}^{p}}+{{\left| {{A}^{*}} \right|}^{p}}}{2} \right)}^{2}}x,x \right\rangle -\left\langle \frac{{{\left| A \right|}^{2p}}+{{\left| {{A}^{*}} \right|}^{2p}}}{2}x,x \right\rangle  \right). 
\end{aligned}\]
Since 
	\[\begin{aligned}
   {{\left| \left\langle Ax,x \right\rangle  \right|}^{2p}}&\le {{\left( \left\langle {{\left| A \right|}^{2t}}x,x \right\rangle \left\langle {{\left| {{A}^{*}} \right|}^{2\left( 1-t \right)}}x,x \right\rangle  \right)}^{p}} \\ 
 & \le \left\langle {{\left| A \right|}^{2pt}}x,x \right\rangle \left\langle {{\left| {{A}^{*}} \right|}^{2p\left( 1-t \right)}}x,x \right\rangle,   
\end{aligned}\]
we get
	\[\begin{aligned}
  & {{\left| \left\langle Ax,x \right\rangle  \right|}^{2p}} \\ 
 & \le \left\langle \left( \left( 1-t \right){{\left| {{A}^{*}} \right|}^{2p}}+t{{\left| A \right|}^{2p}} \right)x,x \right\rangle +2r\left( \left\langle {{\left( \frac{{{\left| A \right|}^{p}}+{{\left| {{A}^{*}} \right|}^{p}}}{2} \right)}^{2}}x,x \right\rangle -\left\langle \frac{{{\left| A \right|}^{2p}}+{{\left| {{A}^{*}} \right|}^{2p}}}{2}x,x \right\rangle  \right). \\ 
\end{aligned}\]
By taking supremum over all unit vector $x\in \mathcal H$ we get the desired result.
\end{proof}
\begin{remark}
Theorem \ref{23} extends the celebrated inequality \cite[(8)]{nk}
	\[\omega \left( A \right)\le \frac{1}{2}\left\| \left| {{A}^{*}} \right|+\left| A \right| \right\|,\]
by setting $t={1}/{2}\;$.
\end{remark}

\subsection{Tsallis Relative Operator Entropy}
We consider $t$-logarithmic function $\ln_tx:=\dfrac{x^t-1}{t}$ defined for $x>0$ and $t>0$.
\begin{lemma}
The $t$-logarithmic function $\ln_tx$ is convex in $t$ if $x \geq 1$, and concave in $t$ if $0<x \le 1$.
\end{lemma}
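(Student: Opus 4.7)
My plan is to determine the sign of $g''(t)$, where $g(t) = \ln_t x = (x^t-1)/t$ is viewed as a function on $(0,\infty)$ with $x>0$ fixed. Setting $a = \ln x$ so that $g(t) = (e^{at}-1)/t$, two applications of the quotient rule give
\[
g'(t) = \frac{(at-1)e^{at}+1}{t^2}, \qquad g''(t) = \frac{e^{at}\bigl(a^{2}t^{2} - 2at + 2\bigr) - 2}{t^{3}}.
\]
Since $t > 0$, convexity (respectively concavity) of $g$ is equivalent to the nonnegativity (respectively nonpositivity) of the numerator
\[
N(t) := e^{at}\bigl(a^{2}t^{2} - 2at + 2\bigr) - 2.
\]

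To control the sign of $N(t)$, I would introduce the single-variable auxiliary function $\phi(u) := e^{u}(u^{2}-2u+2) - 2$, so that $N(t) = \phi(at)$. A short computation collapses nicely:
\[
\phi'(u) = e^{u}(u^{2}-2u+2) + e^{u}(2u-2) = u^{2}\,e^{u} \geq 0.
\]
Hence $\phi$ is nondecreasing on $\mathbb{R}$, and since $\phi(0) = 0$ we conclude $\phi(u) \geq 0$ for $u \geq 0$ and $\phi(u) \leq 0$ for $u \leq 0$.

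Because $t > 0$, the sign of $u = at = t\ln x$ matches the sign of $\ln x$. When $x \geq 1$ we have $u \geq 0$, so $N(t) = \phi(at) \geq 0$, which gives $g''(t) \geq 0$ and therefore $\ln_t x$ is convex in $t$. When $0 < x \leq 1$ we have $u \leq 0$, so $N(t) \leq 0$ and $g''(t) \leq 0$, giving concavity. The only mild obstacle is the bookkeeping in deriving $g''$ and in collapsing $\phi'(u) = u^{2}e^{u}$; both are routine. A conceptually cleaner alternative would use the integral representation
\[
\ln_t x = \ln x \int_{0}^{1} x^{ts}\,ds,
\]
where for each fixed $s \in [0,1]$ the map $t \mapsto e^{ts\ln x}$ is visibly convex in $t$, so the integral is convex in $t$; multiplying by the scalar $\ln x$ then preserves or reverses convexity according to the sign of $\ln x$, which yields the claim at once.
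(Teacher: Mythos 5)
Your main argument is correct and is essentially the paper's proof: you compute the same second derivative $\frac{1}{t^3}\left\{x^t(\log x^t)^2-2x^t\log x^t+2x^t-2\right\}$ and control its sign via an auxiliary function whose derivative is a perfect square and which vanishes at the pivot --- your $\phi(u)=e^{u}(u^{2}-2u+2)-2$ is exactly the paper's $f(y)=y(\log y)^2-2y\log y+2y-2$ after the substitution $y=e^{u}$. Your closing remark via $\ln_t x=\ln x\int_{0}^{1}x^{ts}\,ds$ is a genuinely slicker alternative, but the proof as written matches the paper's route.
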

\begin{proof}
We set the function $f(y):=y(\log y)^2-2y\log y+2y-2$ for $y>0$.
Since $f(1)=0$ and $f'(y)=(\log y)^2 \geq 0$, we have
$f(y)\le 0$ for $0<y\le 1$ and $f(y)\ge 0$ for $y \ge 1$.

By simple calculations, we have
$$
\frac{d^2}{dt^2}\left(\ln_t x\right)=\frac{1}{t^3}\left\{x^t(\log x^t)^2-2x^t\log x^t+2x^t-2\right\}.
$$
By putting $y:=x^t$ in the above, we thus get $\dfrac{d^2}{dt^2}\left(\ln_t x\right)\ge 0$ if $x \geq 1$ and $\dfrac{d^2}{dt^2}\left(\ln_t x\right)\le  0$ if $0<x \le 1$.
\end{proof}

\begin{proposition}
If $0\le A \le B$, then we have
$$
T_{(1-t)a+tb}(A|B)\le (1-t) T_a(A|B) +tT_b(A|B)
$$
for all $0<a,b\le 1$ and $0\le t \le 1$. If $0\le B \le A$, then the reverse inequality above holds.
\end{proposition}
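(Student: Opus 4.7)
The plan is to reduce the operator inequality to a scalar inequality for the $t$-logarithm through functional calculus, then conjugate by $A^{1/2}$.

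First I would set $X := A^{-1/2}BA^{-1/2}$. The hypothesis $0 \le A \le B$ gives $X \ge I$, so $\mathrm{sp}(X) \subset [1,\infty)$. By the lemma just proved, for every fixed $x \ge 1$ the map $s \mapsto \ln_s x$ is convex, so
\[
\ln_{(1-t)a+tb}(x) \le (1-t)\ln_a(x) + t\ln_b(x)
\]
for every $x$ in $\mathrm{sp}(X)$ and every $0\le t\le 1$. Via the Gelfand functional calculus applied to the self-adjoint operator $X$, this scalar inequality lifts to the operator inequality
\[
\ln_{(1-t)a+tb}(X) \le (1-t)\ln_a(X) + t\ln_b(X).
\]

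Next I would conjugate both sides by $A^{1/2}$; since $T \mapsto A^{1/2}TA^{1/2}$ is a positive linear map it preserves operator inequalities, so
\[
A^{1/2}\ln_{(1-t)a+tb}(X)A^{1/2} \le (1-t)A^{1/2}\ln_a(X)A^{1/2} + tA^{1/2}\ln_b(X)A^{1/2},
\]
which by the definition of the Tsallis relative operator entropy is exactly
\[
T_{(1-t)a+tb}(A|B) \le (1-t)T_a(A|B) + tT_b(A|B).
\]

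For the reverse direction, the hypothesis $0 \le B \le A$ now gives $X \le I$, so $\mathrm{sp}(X) \subset (0,1]$. On this spectral range the lemma asserts that $s \mapsto \ln_s x$ is concave, so the pointwise inequality reverses, functional calculus lifts it to a reversed operator inequality on $\ln(X)$, and conjugation by $A^{1/2}$ yields the reverse Tsallis inequality. There is no real obstacle: the only thing to be careful about is tracking whether the spectrum of $X$ sits above or below $1$, which is exactly what determines convexity versus concavity in the lemma. The role of the constraint $0 < a, b \le 1$ is simply to keep the parameters in the standard range on which $T_t$ is defined.
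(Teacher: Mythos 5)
Your proof is correct and follows essentially the same route as the paper: apply the lemma's convexity (resp.\ concavity) of $s\mapsto\ln_s x$ pointwise on the spectrum of $X=A^{-1/2}BA^{-1/2}$, lift via functional calculus, and conjugate by $A^{1/2}$. You merely spell out the spectral localization $X\ge I$ versus $X\le I$ more explicitly than the paper does.
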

\begin{proof}
Since $\ln_t x$ is convex in $t$ for $x \geq 1$, we have
$$
\ln_{(1-t)a+tb}x \le (1-t)\ln_ax+t\ln_bx
$$
for all $0<a,b\le 1$, $0\le t \le 1$ and $x \geq 1$.
Putting $x:=A^{-1/2}BA^{-1/2}$ and then multiplying $A^{1/2}$ to the both sides in the inequality, we get the desired result. For the case $0<x \le 1$, we also obtain the result similarly.
\end{proof}

\begin{theorem}
For $0<t\le 1$ and $A,B\ge 0$, we have
\begin{eqnarray*}
2r\left(\frac{B-A+S(A|B)}{2}-2\left(A\sharp B-A\right)\right)
&\le& (1-t)S(A|B)+t(B-A)- T_t(A|B)\\
&\le& 2R\left(\frac{B-A+S(A|B)}{2}-2\left(A\sharp B-A\right)\right),
\end{eqnarray*}
where $r:=\min \{t,1-t\}$ and $R:=\max\{t,1-t\}$.
\end{theorem}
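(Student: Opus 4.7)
The plan is to apply Lemma~\ref{10} to the scalar map $g_x(s):=\ln_s x$ viewed as a function of $s\in[0,1]$ (extended continuously by $g_x(0):=\log x$), and then lift the resulting pointwise estimate to operators via functional calculus. The lemma immediately preceding this theorem already tells us that $g_x$ is convex in $s$ when $x\ge 1$ and concave when $0<x\le 1$, so Lemma~\ref{10} (applied to $g_x$ or to $-g_x$) is tailor-made for the situation.

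First I would take $a=0$, $b=1$ in Lemma~\ref{10}, read off the values $g_x(0)=\log x$, $g_x(1)=x-1$, $g_x\!\left(\tfrac12\right)=2(\sqrt x-1)$, and $g_x(t)=\ln_t x$, and combine \eqref{21} with \eqref{22} into the single scalar refinement
\[
2r\,\Phi(x)\;\le\;(1-t)\log x+t(x-1)-\ln_t x\;\le\;2R\,\Phi(x),\qquad \Phi(x):=\tfrac{\log x+(x-1)}{2}-2(\sqrt x-1),
\]
valid for $x\ge 1$. Next I would specialize $x$ to $X:=A^{-1/2}BA^{-1/2}$ through functional calculus and sandwich both sides with $A^{1/2}$. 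The identifications
\[
A^{1/2}\log(X)A^{1/2}=S(A|B),\qquad A^{1/2}(X-I)A^{1/2}=B-A,
\]
\[
A^{1/2}(X^{1/2}-I)A^{1/2}=A\sharp B-A,\qquad A^{1/2}\ln_t(X)A^{1/2}=T_t(A|B)
\]
then transcribe the scalar inequality verbatim into the operator inequality of the theorem.

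The main obstacle is the sign of $\Phi$ across the spectrum of $X$: the orientation $2r\Phi\le M\le 2R\Phi$ is the convex orientation, naturally valid where $\mathrm{sp}(X)\subseteq[1,\infty)$, i.e., where $A\le B$. On the complementary region $0<x\le 1$ (equivalently $B\le A$), $g_x$ is concave, and applying Lemma~\ref{10} to $-g_x$ interchanges the roles of $2r$ and $2R$ at the scalar level. The cleanest remedy is to mirror the dichotomy of the proposition immediately above the theorem and argue the two cases $A\le B$ and $B\le A$ separately, or alternatively to split $X$ through its spectral projection at $1$ and reassemble the two pieces; this spectral bookkeeping at $x=1$ is the one point where care is required, the rest being a direct transcription through the functional calculus.
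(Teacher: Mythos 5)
Your strategy coincides with the paper's: apply Lemma \ref{10} to $s\mapsto \ln_s x$ on $[0,1]$ with endpoint values $\log x$ and $x-1$ and midpoint value $2(\sqrt{x}-1)$, obtain the scalar two-sided bound, and transfer it to $X=A^{-1/2}BA^{-1/2}$ by functional calculus (the paper's ``Kubo--Ando theory'' step). You have also correctly isolated the one genuine difficulty, which the paper's proof passes over in silence: Lemma \ref{10} requires convexity, and by the lemma preceding the theorem $s\mapsto\ln_s x$ is convex only for $x\ge 1$ and concave for $0<x\le 1$.

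The problem is that your proposed repair does not close this gap. On the portion of the spectrum of $X$ lying in $(0,1)$, applying Lemma \ref{10} to $-\ln_s x$ yields $2R\,\Phi(x)\le M(x)\le 2r\,\Phi(x)$ with $\Phi(x)\le 0$, i.e.\ the opposite orientation to the one claimed, and this reversal is not cosmetic: for $x=1/4$ and $t=1/4$ one computes $M(x)\approx -0.056$, $2r\Phi(x)\approx -0.034$, $2R\Phi(x)\approx -0.102$, so both claimed inequalities fail pointwise. Consequently neither a case split ($B\le A$ versus $A\le B$) nor a spectral decomposition at $1$ can be ``reassembled'' into the stated two-sided bound; what your argument actually proves is the theorem under the additional hypothesis $\mathrm{sp}\bigl(A^{-1/2}BA^{-1/2}\bigr)\subseteq[1,\infty)$ (in particular $A\le B$), together with the \emph{reversed} inequality when the spectrum lies in $(0,1]$ --- exactly the dichotomy of the proposition preceding the theorem. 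Taking $A=I$ and $B=\frac{1}{4}I$ shows the statement as printed is false without such a hypothesis, so the defect lies in the theorem and in the paper's own proof (which invokes Lemma \ref{10} for all $x>0$) as much as in your write-up; but as a proof of the statement as given, your final assembly step fails.
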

\begin{proof}
We consider $f(t):=\ln_t x$ for $0<t\le 1$ and $x>0$.
Since $f(0)=\lim\limits_{t\to 0}\dfrac{x^t-1}{t}=\log x$, $f(1/2)=2(\sqrt{x}-1)$ and $f(1)=x-1$, by Lemma \ref{10}, we have
{\small
$$
2r\left(\frac{x-1+\log x}{2}-2\left(\sqrt{x}-1\right)\right)\le (1-t)\log x+t(x-1)-\ln_tx \le 2R\left(\frac{x-1+\log x}{2}-2\left(\sqrt{x}-1\right)\right).
$$
}
From Kubo-Ando theory, we obtain the desired results.
\end{proof}

Note that we consequently obtain a natural result $T_{1/2}(A|B)=2\left(A\sharp B -A\right)$ if we take $t=1/2$ in the above theorem.

\begin{theorem}
Let $A,B\in \mathcal B\left( \mathcal H \right)$ be two strictly positive operators. If $t\le s$, then
\[{{T}_{t}}\left( A|B \right)\le {{T}_{s}}\left( A|B \right).\]
\end{theorem}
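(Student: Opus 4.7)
The plan is to reduce the operator inequality to a purely scalar statement about the monotonicity of the $t$-logarithm in $t$, and then lift it through the Gelfand functional calculus. Since $T_t(A|B) = A^{1/2} \ln_t(X) A^{1/2}$ with $X := A^{-1/2} B A^{-1/2} > 0$ (as $A,B$ are strictly positive), it suffices to show that the scalar function $t \mapsto \ln_t(x) = (x^t - 1)/t$ is non-decreasing on $(0,\infty)$ for every fixed $x > 0$; then evaluating at the positive operator $X$ and sandwiching by $A^{1/2}$ will finish the argument.

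For the scalar monotonicity, my preferred argument is to observe that for each fixed $x>0$ the function $s \mapsto x^s$ is convex in $s$ (its second derivative is $(\log x)^2 \, x^s \geq 0$). For a convex function, the divided difference
\[
\frac{x^t - x^0}{t - 0} \;=\; \frac{x^t - 1}{t} \;=\; \ln_t(x)
\]
is non-decreasing in $t$, which is exactly what is needed. Alternatively, one can differentiate directly: writing
\[
\frac{d}{dt}\ln_t(x) \;=\; \frac{t x^t \log x - (x^t - 1)}{t^2},
\]
the numerator $g(t) := t x^t \log x - x^t + 1$ satisfies $g(0) = 0$ and $g'(t) = t\, x^t (\log x)^2 \geq 0$, so $g(t) \geq 0$ for $t \geq 0$, establishing the same conclusion.

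With the scalar inequality $\ln_t(x) \leq \ln_s(x)$ in hand for all $x > 0$ and $0 < t \leq s$, the functional calculus applied to the strictly positive operator $X$ (whose spectrum is a compact subset of $(0,\infty)$) yields $\ln_t(X) \leq \ln_s(X)$. Conjugation by $A^{1/2}$ preserves operator order, so
\[
T_t(A|B) \;=\; A^{1/2} \ln_t(X) A^{1/2} \;\leq\; A^{1/2} \ln_s(X) A^{1/2} \;=\; T_s(A|B),
\]
which is the desired inequality.

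The only real content is the scalar monotonicity of $\ln_t(x)$ in $t$; once that is in place the rest is routine functional calculus. I expect the convexity-of-$x^s$ route to be the cleanest, since it avoids the slightly fussy sign analysis one otherwise has to perform when $x < 1$ versus $x \geq 1$ (note that in the differential argument above, the bound $g(t) \geq 0$ holds uniformly in $x > 0$, so no case split is needed there either).
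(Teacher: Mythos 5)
Your proof is correct. It rests on the same underlying mechanism as the paper's --- namely that the divided difference $\frac{g(t)-g(0)}{t}$ of a convex function $g$ is non-decreasing in $t$ --- but the two arguments apply this at different levels. The paper takes $g(t)=\left\langle A{\sharp}_{t}Bx,x\right\rangle$, whose convexity is supplied by its preliminary proposition on interpolational means (proved via the property $\left( A{{\sigma }_{\alpha }}B \right)\sigma \left( A{{\sigma }_{\beta }}B \right)=A{{\sigma }_{\frac{\alpha +\beta }{2}}}B$ together with the fact that the arithmetic mean dominates every symmetric mean); since $g(0)=\left\langle Ax,x\right\rangle$, the increasing divided difference is exactly $\left\langle T_t(A|B)x,x\right\rangle$, and letting $x$ range over unit vectors gives the operator inequality. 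You instead take the scalar function $t\mapsto x^{t}$, verify its convexity by direct differentiation, and lift the resulting scalar inequality $\ln_t(x)\le \ln_s(x)$ through the functional calculus of the single self-adjoint operator $X=A^{-1/2}BA^{-1/2}$, followed by congruence with $A^{1/2}$. This is legitimate: order-preservation of the functional calculus holds here precisely because $\ln_t$ and $\ln_s$ are evaluated at the same operator $X$. Your route is more elementary and self-contained, requiring no operator-mean machinery at all; the paper's route is no harder given its earlier proposition, and has the advantage that the identical one-line argument yields monotonicity of $t\mapsto \frac{A\sigma_t B-A}{t}$ for any interpolational path $\sigma_t$, not only $\sharp_t$. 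Both arguments are complete.
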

\begin{proof}
We know that if $g$ is convex function, then the function
\[h\left( t \right)=\frac{g\left( t \right)-g\left( a \right)}{t-a}\]
is increasing. This means that the function 
\[f\left( t \right)=\left\langle \left( \frac{A{{\sharp}_{t}}B-A}{t} \right)x,x \right\rangle =\left\langle {{T}_{t}}\left( A|B \right)x,x \right\rangle \]
is increasing, thanks to Proposition \ref{20}.
\end{proof}

\section*{Acknowledgment} 
The author (S.F.) was partially supported by JSPS KAKENHI Grant Number 21K03341.

\vskip 0.6 true cm

\tiny(H.R. Moradi) Young Researchers and Elite Club, Mashhad Branch, Islamic Azad University, Mashhad, Iran

{\it E-mail address:} hrmoradi@mshdiau.ac.ir

\vskip 0.4 true cm

(S. Furuichi) Department of Information Science, College of Humanities and Sciences, Nihon University, 3-25-40, Sakurajyousui, Setagaya-ku, Tokyo, 156-8550, Japan.

{\it E-mail address:} furuichi@chs.nihon-u.ac.jp

\vskip 0.4 true cm
{\tiny (M. Sababheh) Department of Basic Sciences, Princess Sumaya University for Technology, Amman 11941,
	Jordan. 
	
	\textit{E-mail address:} sababheh@yahoo.com; sababheh@psut.edu.jo}

\end{document}